\newtheorem{thm}[equation]{Theorem}
\numberwithin{equation}{section}
\newtheorem{cor}[equation]{Corollary}
\newtheorem{lem}[equation]{Lemma}
\begin{document}
\raggedbottom \voffset=-.7truein \hoffset=0truein \vsize=8truein
\hsize=6truein \textheight=8truein \textwidth=6truein
\baselineskip=18truept
\def\mapright#1{\ \smash{\mathop{\longrightarrow}\limits^{#1}}\ }
\def\ss{\smallskip}
\def\ssum{\sum\limits}
\def\dsum{{\displaystyle{\sum}}}
\def\la{\langle}
\def\ra{\rangle}
\def\on{\operatorname}
\def\u{\on{u}}
\def\lg{\on{lg}}
\def\a{\alpha}
\def\bz{{\Bbb Z}}
\def\eps{\epsilon}
\def\br{{\bold R}}
\def\bc{{\bold C}}
\def\bN{{\bold N}}
\def\nut{\widetilde{\nu}}
\def\tfrac{\textstyle\frac}
\def\product{\prod}
\def\b{\beta}
\def\G{\Gamma}
\def\g{\gamma}
\def\zt{{\Bbb Z}_2}
\def\zth{{\bold Z}_2^\wedge}
\def\bs{{\bold s}}
\def\bg{{\bold g}}
\def\bof{{\bold f}}
\def\bq{{\bold Q}}
\def\be{{\bold e}}
\def\line{\rule{.6in}{.6pt}}
\def\xb{{\overline x}}
\def\xbar{{\overline x}}
\def\ybar{{\overline y}}
\def\zbar{{\overline z}}
\def\ebar{{\overline \be}}
\def\nbar{{\overline n}}
\def\fbar{{\overline f}}
\def\Ubar{{\overline U}}
\def\et{{\widetilde e}}
\def\ni{\noindent}
\def\ms{\medskip}
\def\ahat{{\hat a}}
\def\bhat{{\hat b}}
\def\chat{{\hat c}}
\def\nbar{{\overline{n}}}
\def\minp{\min\nolimits'}
\def\N{{\Bbb N}}
\def\Z{{\Bbb Z}}
\def\Q{{\Bbb Q}}
\def\R{{\Bbb R}}
\def\C{{\Bbb C}}
\def\el{\ell}
\def\mo{\on{mod}}
\def\dstyle{\displaystyle}
\def\ds{\dstyle}
\def\Remark{\noindent{\it  Remark}}
\title
{Binomial coefficients involving infinite powers of primes}
\author{Donald M. Davis}
\address{Department of Mathematics, Lehigh University\\Bethlehem, PA 18015, USA}
\email{dmd1@lehigh.edu}
\date{January 26, 2013}

\keywords{binomial coefficients, $p$-adic integers}
\thanks {2000 {\it Mathematics Subject Classification}:
05A10, 11B65, 11D88.}

\maketitle
\begin{abstract} If $p$ is a prime and $n$ a positive integer, let $\nu_p(n)$ denote the exponent of $p$ in $n$, and $\u_p(n)=n/p^{\nu_p(n)}$ the unit part of $n$.
If $\a$ is a positive integer not divisible by $p$, we show that the $p$-adic limit of $(-1)^{p\a e}\u_p((\a p^e)!)$ as $e\to\infty$ is a well-defined $p$-adic integer, which
we call $z_{\a,p}$. In terms of these, we then give a formula for the $p$-adic limit of $\binom{ap^e+c}{bp^e+d}$ as $e\to\infty$, which we call $\binom{ap^\infty+c}{bp^\infty+d}$.
Here $a\ge b$ are positive integers, and $c$ and $d$ are integers.
\end{abstract}
\section{Statement of results}\label{intro}
Let $p$ be a prime number, fixed throughout.
The set $\Z_p$ of $p$-adic integers consists of expressions of the form $x=\ds\sum_{i=0}^\infty c_ip^i$ with $0\le c_i\le p-1$.
The nonnegative integers are those $x$ for which the sum is finite.
The metric on $\Z_p$ is defined by $d(x,y)=1/p^{\nu(x-y)}$, where $\nu(x)=\min\{i:\ c_i\ne0\}$. (See, e.g., \cite{Gou}).) The prime $p$ will be implicit in most of our notation.

If $n$ is a positive integer, let $\u(n)=n/p^{\nu(n)}$ denote the unit part of $n$ (with respect to $p$). Our first result is
\begin{thm}\label{thm1} Let $\a$ be a positive integer which is not divisible by $p$. If $p^e>4$, then
$$\u((\a p^{e-1})!)\equiv(-1)^{p\a}\u((\a p^e)!)\mod p^e.$$
\end{thm}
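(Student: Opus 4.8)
The plan is to reduce the statement to a Wilson-type congruence. Let $P$ denote the product of the integers in $[1,\a p^e]$ that are prime to $p$; this is a $p$-adic unit. The multiples of $p$ in $[1,\a p^e]$ are precisely $p\cdot1,p\cdot2,\dots,p\cdot(\a p^{e-1})$, so $(\a p^e)!=P\cdot p^{\a p^{e-1}}\cdot(\a p^{e-1})!$, and applying $\u$ (which is multiplicative and satisfies $\u(p^{\a p^{e-1}})=1$) gives the exact identity $\u((\a p^e)!)=P\cdot\u((\a p^{e-1})!)$. Since $\u((\a p^{e-1})!)$ is a unit modulo $p^e$, cancelling it shows that the theorem is equivalent to the assertion $P\equiv(-1)^{p\a}\pmod{p^e}$.

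To evaluate $P$ modulo $p^e$, I would partition $[1,\a p^e]$ into the $\a$ blocks $(kp^e,(k+1)p^e]$ for $0\le k<\a$. Inside each block the integers prime to $p$ form, after reduction mod $p^e$, a complete set of representatives for the units of $\Z/p^e$, so the product over each block is congruent to $W:=\prod_{u\in(\Z/p^e)^\times}u$; hence $P\equiv W^\a\pmod{p^e}$. It then remains to compute $W$. Using the involution $u\mapsto u^{-1}$ on $(\Z/p^e)^\times$, all factors cancel in pairs except the solutions of $u^2\equiv1\pmod{p^e}$, so $W$ is the product of those solutions. When $p$ is odd the unit group is cyclic of even order, the only solutions are $\pm1$, and $W\equiv-1\pmod{p^e}$. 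When $p=2$ and $e\ge3$ the solutions are $\pm1$ and $2^{e-1}\pm1$, whose product is $1-2^{2e-2}\equiv1\pmod{p^e}$.

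Combining the pieces: if $p$ is odd and $p^e>4$, then $P\equiv W^\a\equiv(-1)^\a=(-1)^{p\a}\pmod{p^e}$, the last equality because $p$ is odd; if $p=2$, the hypothesis $p^e>4$ forces $e\ge3$, so $P\equiv W^\a\equiv1=(-1)^{2\a}=(-1)^{p\a}\pmod{p^e}$. In both cases $P\equiv(-1)^{p\a}\pmod{p^e}$, which is the reduced form of the theorem.

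I expect the only genuinely delicate point to be the prime $2$: one must verify that for $e\ge3$ the product of the order-at-most-$2$ units of $\Z/2^e$ is $\equiv1$ rather than $-1$, and observe that it is precisely this sign—together with the small exceptional moduli $p^e=2,4$, where $W$ fails to be compatible with $(-1)^{p\a}$—that accounts for the hypothesis $p^e>4$. The remaining steps are elementary bookkeeping with the factorial factorization and the blocking argument.
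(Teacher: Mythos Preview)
Your proof is correct. Both your argument and the paper's reduce the theorem to the generalized Wilson congruence $W:=\prod_{u\in(\Z/p^e)^\times}u\equiv(-1)^p\pmod{p^e}$ for $p^e>4$, and both show that $\u((\a p^e)!)/\u((\a p^{e-1})!)\equiv W^{\a}$. The difference is in how that last identity is obtained. You use the factorial splitting $(\a p^e)!=P\cdot p^{\a p^{e-1}}\cdot(\a p^{e-1})!$ to identify the ratio with the product $P$ of units in $[1,\a p^e]$, then partition $[1,\a p^e]$ into $\a$ consecutive blocks of length $p^e$. The paper instead works directly on the interval $(\a p^{e-1},\a p^e]$: its Lemma shows that the multiset of residues $\u(i)\bmod p^e$ for $i$ in that interval covers each unit exactly $\a$ times, via the bijection that sends a unit $u\in[1,\a p^e]$ to the unique $p^tu$ lying in $(\a p^{e-1},\a p^e]$. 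Your route is the more standard one and is slightly more elementary; the paper's lemma, on the other hand, isolates a combinatorial fact about the map $i\mapsto\u(i)$ on such intervals that has some independent interest. The paper also cites the Wilson-type congruence from Granville rather than proving it, whereas you supply the usual involution argument.
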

\begin{cor}\label{cordef} If $\a$ is as in Theorem \ref{thm1}, then $\ds\lim_{e\to\infty}(-1)^{p\a e}\u((\a p^e)!)$ exists in $\Z_p$. We denote this
limiting $p$-adic integer by $z_\a$.\end{cor}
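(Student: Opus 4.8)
The plan is to show that the sequence $a_e:=(-1)^{p\a e}\u((\a p^e)!)$ is a Cauchy sequence in the complete metric space $\Z_p$; its limit is then automatically a well-defined $p$-adic integer (in fact a $p$-adic unit, since each $\u((\a p^e)!)$ is prime to $p$), and this limit is what we call $z_\a$.

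First I would reformulate Theorem \ref{thm1} in terms of the $a_e$. Let $e_0$ be the least positive integer with $p^{e_0}>4$, so that $e_0=3,2,1$ according as $p=2$, $p=3$, or $p\ge5$. For every $e>e_0$ the hypothesis $p^e>4$ holds, so Theorem \ref{thm1} gives
$$\u((\a p^{e-1})!)\equiv(-1)^{p\a}\u((\a p^e)!)\pmod{p^e}.$$
Multiplying this congruence by the integer $(-1)^{p\a(e-1)}$ and using $(-1)^{p\a(e-1)}\cdot(-1)^{p\a}=(-1)^{p\a e}$ yields
$$a_{e-1}\equiv a_e\pmod{p^e}\qquad(e>e_0).$$

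Next I would extract the Cauchy condition. Given $m>n\ge e_0$, I chain the congruences above: for each $k$ with $n<k\le m$ we have $a_{k-1}\equiv a_k\pmod{p^k}$, hence also $a_{k-1}\equiv a_k\pmod{p^{n+1}}$ since $p^{n+1}\mid p^k$. Composing these $m-n$ congruences gives $a_n\equiv a_m\pmod{p^{n+1}}$, i.e.\ $d(a_n,a_m)\le p^{-(n+1)}$. As this bound tends to $0$ with $n$, the sequence $(a_e)_{e\ge e_0}$ is Cauchy, and completeness of $\Z_p$ provides the limit $z_\a=\ds\lim_{e\to\infty}a_e$.

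There is no real obstacle here once Theorem \ref{thm1} is available; the only points needing a moment's care are (i) keeping track of the sign factors when passing from the statement of the theorem to the relation $a_{e-1}\equiv a_e$, and (ii) observing that the restriction $p^e>4$ excludes only finitely many $e$ and hence is irrelevant to the existence of the limit.
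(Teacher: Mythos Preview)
Your argument is correct and is exactly the natural unpacking of why the paper can state this as an immediate corollary of Theorem~\ref{thm1}: the congruence of Theorem~\ref{thm1} says precisely that consecutive terms of $(a_e)$ agree mod $p^e$, so the sequence is Cauchy in $\Z_p$. The paper gives no separate proof, so there is nothing further to compare.
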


If $p=2$ or $\a$ is even, then $z_\a$ could be thought of as $\u((\a p^\infty)!)$.
It is easy for  {\tt Maple} to compute $z_\a$ mod $p^m$ for $m$ fairly large. For example, if $p=2$, then $z_1\equiv 1+2+2^3+2^7+2^9+2^{10}+2^{12}\mod 2^{15}$,
and if $p=3$, then $z_1\equiv1+2\cdot3+2\cdot3^2+2\cdot3^4+3^6+2\cdot3^7+2\cdot3^8\mod 3^{11}$. It would be interesting to know
if there are algebraic relationships among the various $z_\a$ for a fixed prime $p$.

There are two well-known formulas for the power of $p$ dividing a binomial coefficient $\binom ab$. (See, e.g., \cite{Gr}). One is that
$$\nu\tbinom ab=\tfrac1{p-1}(d_p(b)+d_p(a-b)-d_p(a)),$$
where $d_p(n)$ denotes sum of the coefficients when $n$ is written in $p$-adic form as above. Another is that $\nu\binom ab$ equals
the number of carries in the base-$p$ addition of $b$ and $a-b$.
Clearly $\nu\binom{ap^e}{bp^e}=\nu\binom ab$.

Our next result involves the unit factor of $\binom{ap^e}{bp^e}$. Here one of $a$ or $b$ might be divisible by $p$.
For a positive integer $n$, let $z_n=z_{\u(n)}$, where $z_{\u(n)}\in\Z_p$ is as defined in Corollary \ref{cordef}.

\begin{thm} \label{thm2} Suppose $1\le b\le a$, $\nu(a-b)=0$, and $\{\nu(a),\nu(b)\}=\{0,k\}$ with $k\ge0$. Then
$$\u\binom{ap^e}{bp^e}\equiv(-1)^{pck}\frac{z_a}{z_bz_{a-b}}\mod p^e,$$
where $c=\begin{cases}a&\text{if }\nu(a)=k\\ b&\text{if }\nu(b)=k.\end{cases}$
\end{thm}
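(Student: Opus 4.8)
The plan is to reduce the statement to an iterated form of Theorem \ref{thm1} via the multiplicativity of the unit-part operator. Since $\binom{ap^e}{bp^e}=(ap^e)!\big/\bigl((bp^e)!\,((a-b)p^e)!\bigr)$ and $\u(xy)=\u(x)\u(y)$ on positive rationals (because $\nu$ is additive), I would first record
$$\u\binom{ap^e}{bp^e}=\frac{\u((ap^e)!)}{\u((bp^e)!)\,\u(((a-b)p^e)!)},$$
so it is enough to control each factor $\u((np^e)!)$, $n\in\{a,b,a-b\}$, modulo $p^e$. Next I would upgrade Corollary \ref{cordef} to the quantitative form: if $\a$ is prime to $p$ and $p^e>4$, then
$$(-1)^{p\a f}\u((\a p^f)!)\equiv z_\a\quad\text{for all }f\ge e,\ \mod p^e.$$
Indeed, for $j\ge e$, Theorem \ref{thm1} applied at level $j+1$ (legitimate since $p^{j+1}>4$) gives $(-1)^{p\a j}\u((\a p^j)!)\equiv(-1)^{p\a(j+1)}\u((\a p^{j+1})!)\mod p^e$, so the indicated sequence is constant mod $p^e$ for $j\ge e$; by Corollary \ref{cordef} that constant is $z_\a$. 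Note $z_\a$ is a $p$-adic unit, being a limit of units, so the ratios below are well defined.

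Now write each $n\in\{a,b,a-b\}$ as $n=p^{\nu(n)}\u(n)$, so that $(np^e)!=(\u(n)\,p^{\,e+\nu(n)})!$; applying the upgraded statement with $\a=\u(n)$ and $f=e+\nu(n)\ge e$ yields
$$\u((np^e)!)\equiv(-1)^{p\u(n)(e+\nu(n))}z_{\u(n)}\mod p^e,$$
and $z_{\u(n)}=z_n$. The hypothesis $\nu(a-b)=0$ makes the $(a-b)$-term carry no valuation shift, and $\{\nu(a),\nu(b)\}=\{0,k\}$ means exactly one of the $a$- and $b$-terms carries a shift, namely the one indexed by $c$, of size $k$. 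Substituting the three congruences into the displayed identity — division of mod-$p^e$ congruences being legitimate since the denominators are units — gives $\u\binom{ap^e}{bp^e}\equiv(-1)^{\epsilon}\,z_a\big/(z_bz_{a-b})\mod p^e$, where
$$\epsilon=pe\bigl(\u(a)-\u(b)-\u(a-b)\bigr)+p\bigl(\nu(a)\u(a)-\nu(b)\u(b)\bigr).$$

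It remains to evaluate $(-1)^\epsilon$. In any exponent of $-1$ of the form $p\cdot(\text{integer})$ one may freely replace $\u(n)$ by $n$ (or $\u(c)$ by $c$): this is trivial when $p=2$, since all such exponents are even, and when $p$ is odd it follows because $n-\u(n)=\u(n)(p^{\nu(n)}-1)$ is even. Hence the first summand of $\epsilon$ contributes $(-1)^{pe(a-b-(a-b))}=1$, and the second summand — which equals $+pk\u(c)$ if $\nu(a)=k$ and $-pk\u(c)$ if $\nu(b)=k$ — contributes $(-1)^{pk\u(c)}=(-1)^{pck}$, using $c=p^k\u(c)$ and the same parity remark. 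Thus $(-1)^\epsilon=(-1)^{pck}$, which is the asserted sign. The one place demanding care is precisely this sign bookkeeping: keeping the two cases $\nu(a)=k$ and $\nu(b)=k$ straight and checking that every $e$-dependent sign cancels. (One may assume $p^e>4$ throughout, exactly as in Theorem \ref{thm1}; the remaining cases with $p^e\le4$, if wanted, need a separate direct check but do not affect the limiting formula that motivates the statement.)
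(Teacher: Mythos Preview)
Your argument is correct and follows essentially the same route as the paper: factor $\u\binom{ap^e}{bp^e}$ into unit parts of factorials, replace each $\u((np^e)!)$ by $(-1)^{p\u(n)(e+\nu(n))}z_n$ via Theorem~\ref{thm1} and Corollary~\ref{cordef}, and then simplify the signs. The paper's proof is terser---it treats only the case $\nu(b)=0$ explicitly and leaves the sign cancellation as a one-line computation---whereas you spell out the quantitative form of Corollary~\ref{cordef}, handle both cases uniformly, and justify the parity replacement $\u(n)\leftrightarrow n$ in the exponent; but the underlying strategy is identical.
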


Since $\nu\binom{ap^e}{bp^e}$ is independent of $e$, we obtain the following immediate corollary.
\begin{cor} In the notation and hypotheses of Theorem \ref{thm2},  in $\Z_p$
$$\binom{ap^\infty}{bp^\infty}:=\lim_{e\to\infty}\binom{ap^e}{bp^e}=p^{\nu\binom ab}(-1)^{pck}\frac{z_a}{z_bz_{a-b}}.$$\label{cor2}
\end{cor}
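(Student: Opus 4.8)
The plan is to read the corollary off from Theorem~\ref{thm2} essentially verbatim, using only the definition of the $p$-adic metric and the elementary structure of $\Z_p$. First I would record two preliminary observations. One: by definition of the unit part, $\binom{ap^e}{bp^e}=p^{\nu\binom{ap^e}{bp^e}}\,\u\binom{ap^e}{bp^e}$, and, as noted just before Theorem~\ref{thm2}, $\nu\binom{ap^e}{bp^e}=\nu\binom ab$ is independent of $e$. Two: $z_b$ and $z_{a-b}$ are $p$-adic \emph{units}. Indeed, by Corollary~\ref{cordef} each of them is a $p$-adic limit of numbers of the form $\pm\,\u((\cdot\,p^e)!)$, every one of which is prime to $p$; since the units $\Z_p^\times$ form a closed subset of $\Z_p$, the limit lies in $\Z_p^\times$. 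Hence the quotient $w:=(-1)^{pck}\,z_a/(z_bz_{a-b})$ is a well-defined element of $\Z_p$ (in fact a unit), so the right-hand side of the asserted formula makes sense.

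With these in hand, Theorem~\ref{thm2} says precisely that for every (sufficiently large) $e$ one has $\u\binom{ap^e}{bp^e}\equiv w\pmod{p^e}$, i.e.\ $\nu\bigl(\u\binom{ap^e}{bp^e}-w\bigr)\ge e$, which by the definition $d(x,y)=p^{-\nu(x-y)}$ of the metric on $\Z_p$ means $d\bigl(\u\binom{ap^e}{bp^e},\,w\bigr)\le p^{-e}\to 0$. Thus $\u\binom{ap^e}{bp^e}\to w$ in $\Z_p$. Multiplication by the fixed scalar $p^{\nu\binom ab}$ satisfies $\nu(p^{m}x-p^{m}y)=m+\nu(x-y)$, so it is continuous (indeed a contraction by the factor $p^{-\nu\binom ab}$); applying it gives
$$\binom{ap^e}{bp^e}=p^{\nu\binom ab}\,\u\binom{ap^e}{bp^e}\ \longrightarrow\ p^{\nu\binom ab}w=p^{\nu\binom ab}(-1)^{pck}\frac{z_a}{z_bz_{a-b}},$$
which is the value claimed for $\binom{ap^\infty}{bp^\infty}=\lim_{e\to\infty}\binom{ap^e}{bp^e}$.

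There is no substantive obstacle here — the content is entirely in Theorem~\ref{thm2}, and this corollary is its bookkeeping restatement. The only points deserving a line of comment are the two I singled out above: that $z_b$ and $z_{a-b}$ are units (so one may divide by them in $\Z_p$), and that a congruence valid modulo $p^e$ for \emph{every} $e$ is exactly the assertion of $p$-adic convergence, since the modulus grows with the index. If one is uneasy about the first few values of $e$ (where the hypotheses that produce the congruence in Theorem~\ref{thm2}, or in Theorem~\ref{thm1}, could degenerate), it is harmless to discard finitely many initial terms, since a $p$-adic limit depends only on the tail of the sequence.
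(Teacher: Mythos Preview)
Your argument is correct and is exactly the approach the paper takes: the paper states the corollary as ``immediate'' from Theorem~\ref{thm2} together with the observation (made just before the corollary) that $\nu\binom{ap^e}{bp^e}=\nu\binom ab$ is independent of $e$. Your write-up simply spells out the two tacit points (that $z_b,z_{a-b}\in\Z_p^\times$ so the quotient is defined, and that multiplication by the fixed factor $p^{\nu\binom ab}$ preserves convergence), which is entirely in the spirit of the paper's one-line justification.
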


Our final result analyzes $\binom{ap^\infty+c}{bp^\infty+d}$, where $c$ and $d$ are integers, possibly negative.
\begin{thm}\label{thm3} If $a$ and $b$ are as in Theorem \ref{thm2}, and $c$ and $d$ are integers, then in $\Z_p$
$$\binom{ap^\infty+c}{bp^\infty+d}:=\lim_{e\to\infty}\binom{ap^e+c}{bp^e+d}=\begin{cases}\binom{ap^\infty}{bp^\infty}\binom cd&c,d\ge0\\
\binom{ap^\infty}{bp^\infty}\binom cd\frac{a-b}a&c<0\le d\\
\binom{ap^\infty}{bp^\infty}\binom c{c-d}\frac ba&c<0\le c-d\\
0&\text{otherwise.}\end{cases}$$\end{thm}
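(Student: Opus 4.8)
The plan is to factor out $\binom{ap^e}{bp^e}$, whose $p$-adic limit $\binom{ap^\infty}{bp^\infty}$ is supplied by Corollary~\ref{cor2}, and to analyze what is left. For a positive integer $m$ and an integer $j$ put $P_e(m,j)=(mp^e+j)!/(mp^e)!$, so that for $e$ large
$$P_e(m,j)=\prod_{i=1}^{j}(mp^e+i)\ \ (j\ge0),\qquad P_e(m,j)=\Bigl(\prod_{i=0}^{-j-1}(mp^e-i)\Bigr)^{-1}\ \ (j<0).$$
Since $(ap^e+c)-(bp^e+d)=(a-b)p^e+(c-d)$, we have the identity
$$\binom{ap^e+c}{bp^e+d}=\binom{ap^e}{bp^e}\cdot\frac{P_e(a,c)}{P_e(b,d)\,P_e(a-b,c-d)}=:\binom{ap^e}{bp^e}\cdot R_e .$$
Two elementary facts drive everything: (i) $mp^e+i\to i$ in $\Z_p$ as $e\to\infty$, so by continuity of multiplication $P_e(m,j)\to j!$ whenever $j\ge0$; and (ii) when $j<0$ the term $i=0$ contributes the factor $1/(mp^e)$, of valuation $-e-\nu(m)$, while the remaining terms give $\prod_{i=1}^{-j-1}(mp^e-i)^{-1}\to\bigl((-1)^{-j-1}(-j-1)!\bigr)^{-1}$, so that $\nu\bigl(P_e(m,j)\bigr)=-e-\nu(m)-\nu\bigl((-j-1)!\bigr)$ once $e$ is large.

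I would first dispose of the ``otherwise'' cases, which are exactly $0\le c<d$, \ $d<0\le c$, and $c<d<0$. Writing $\delta(S)\in\{0,1\}$ for the truth value of a statement $S$, facts (i)--(ii) give, for $e$ large, $\nu(R_e)=e\bigl(\delta(d<0)+\delta(c-d<0)-\delta(c<0)\bigr)+O(1)$; a short check shows the bracketed integer equals $1$ in precisely these three cases and $0$ in the three remaining ones. Since $\nu\binom{ap^e}{bp^e}=\nu\binom ab$ is independent of $e$, it follows that $\nu\binom{ap^e+c}{bp^e+d}\to\infty$, so the limit is $0$. (When $0\le c<d$ this is consistent with $\binom cd=0$.)

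In the three remaining cases I would compute $\lim_{e\to\infty}R_e$ directly and multiply by $\binom{ap^\infty}{bp^\infty}$. If $0\le d\le c$, then $R_e=\prod_{i=1}^{c}(ap^e+i)\big/\bigl(\prod_{i=1}^{d}(bp^e+i)\prod_{i=1}^{c-d}((a-b)p^e+i)\bigr)\to c!/\bigl(d!\,(c-d)!\bigr)=\binom cd$. If $c<0\le d$ (hence $c-d<0$), the factor $1/(ap^e)$ from $P_e(a,c)$ and the factor $(a-b)p^e$ from $1/P_e(a-b,c-d)$ combine to $(a-b)/a$, and the remaining convergent products, regrouped via the identity $\binom xy=(-1)^y\binom{y-x-1}{y}$ valid for $x<0$ and $y\ge0$, give $\lim_e R_e=\frac{a-b}{a}\binom cd$. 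If $d\le c<0$ (hence $c-d\ge0$), the analogous surviving $p$-power factors are $1/(ap^e)$ from $P_e(a,c)$ and $bp^e$ from $1/P_e(b,d)$, combining to $b/a$, and the same identity yields $\lim_e R_e=\frac ba\binom c{c-d}$. These give the three nonzero formulas of the theorem.

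The part demanding care is the sign and power-of-$p$ bookkeeping in the preceding paragraph: one must confirm that in each of the three good cases the spurious factors of $p^{\pm e}$ cancel down to exactly $(a-b)/a$ or $b/a$ — so that the asserted value indeed lies in $\Z_p$, which is anyway forced since it is a limit of integers, and which one sees directly from $\nu\binom ab\ge\nu(a)$ when $\nu(a)=k$ (use $\binom ab=\tfrac ab\binom{a-1}{b-1}$) — and that the alternating-sign rearrangement of the limits of the remaining products reproduces $\binom cd$ or $\binom c{c-d}$ exactly. Apart from this, the argument is just continuity of multiplication and inversion in $\Q_p$ together with $mp^e\to0$.
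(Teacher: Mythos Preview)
Your argument is correct. It differs from the paper's in a structural way worth noting. The paper does not carry out the full six-case computation of $R_e$; instead it proves a small abstract lemma (Lemma~\ref{lem2}) saying that any function $f(c,d)$ satisfying Pascal's relation together with the boundary data $f(0,d)=A\delta_{0,d}$ and $f(c,0)=Ar$ for $c<0$ must take exactly the four-line form in the theorem. It then verifies only those boundary values for $f_e(c,d)=\binom{ap^e+c}{bp^e+d}$: the vanishing of $\binom{ap^e}{bp^e+d}$ for $d\ne0$ (your case $0\le c<d$ together with the symmetry $d\mapsto |d|$), and the limit $\binom{ap^e-m}{bp^e}\to\binom{ap^\infty}{bp^\infty}\tfrac{a-b}a$ (your case $c<0\le d$ at $d=0$), both by the same factoring-out-$\binom{ap^e}{bp^e}$ trick you use. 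Pascal's relation then carries the result to all $(c,d)$.

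So the paper trades your four remaining direct computations for one combinatorial lemma; your route is more hands-on but entirely self-contained and avoids introducing the auxiliary function-theoretic statement. The explicit valuation bookkeeping $\nu(R_e)=e\bigl(\delta(d<0)+\delta(c-d<0)-\delta(c<0)\bigr)+O(1)$ is a clean way to isolate the ``otherwise'' cases that the paper handles only implicitly through the lemma.
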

\noindent We use the standard definition that if $c\in\Z$ and $d\ge0$, then
$$\tbinom cd=c(c-1)\cdots (c-d+1)/d!.$$

These ideas arose in work of the author extending the work in \cite{D} and \cite{D2}.
\section{Proofs}\label{pfsec}
In this section, we prove the three theorems stated in Section \ref{intro}.
The main ingredient in the proof of Theorem \ref{thm1} is the following lemma.

\begin{lem}\label{lem} Let $\a$ be a positive integer which is not divisible by $p$.
Let $S$ denote the multiset consisting of the least nonnegative residues mod $p^e$ of $\u(i)$ for all $i$ satisfying $\a p^{e-1}<i\le \a p^e$.
Then every positive $p$-adic unit less than $p^e$ occurs exactly $\a$ times in $S$.\end{lem}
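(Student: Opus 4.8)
The plan is to fix a positive $p$-adic unit $v$ with $1\le v<p^e$ and $p\nmid v$, and to count exactly how many $i$ with $\a p^{e-1}<i\le\a p^e$ satisfy $\u(i)\equiv v\pmod{p^e}$. Since the least nonnegative residue of such a $\u(i)$ is $v$ itself, and these $v$'s are precisely the positive $p$-adic units less than $p^e$, it suffices to show that this count always equals $\a$.

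First I would characterize the relevant $i$: an integer $i$ satisfies $\u(i)\equiv v\pmod{p^e}$ if and only if $i=p^j(v+mp^e)$ for some integers $j\ge0$ and $m\ge0$. Indeed, writing $i=p^{\nu(i)}\u(i)$, the congruence forces $\u(i)=v+mp^e$, and $m\ge0$ because $1\le v<p^e$ makes $v+mp^e$ negative for $m<0$; conversely $p\nmid(v+mp^e)$ since $p\nmid v$, so $p^j(v+mp^e)$ has unit part $v+mp^e\equiv v$. This representation is unique, with $j=\nu(i)$ and $m=(\u(i)-v)/p^e$, so counting such $i$ is the same as counting admissible pairs $(j,m)$.

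Next, for each fixed $j\ge0$, the condition $\a p^{e-1}<p^j(v+mp^e)\le\a p^e$ is equivalent to
$$\frac{\a}{p^{j+1}}-\frac{v}{p^e}<m\le\frac{\a}{p^j}-\frac{v}{p^e}.$$
As $j$ runs over $0,1,2,\dots$ these half-open intervals are pairwise disjoint and consecutive (the right endpoint for $j+1$ coincides with the left endpoint for $j$), and their union is $\bigl(-v/p^e,\ \a-v/p^e\bigr]$. Since $0<v/p^e<1$, every integer in this union automatically satisfies $m\ge0$, and the integers it contains are exactly $0,1,\dots,\a-1$. Hence the number of admissible pairs $(j,m)$, equivalently the number of $i$ in the range with $\u(i)\equiv v$, is exactly $\a$. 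As a consistency check, summing over the $p^{e-1}(p-1)$ units $v$ recovers the total $\a p^e-\a p^{e-1}$ of integers in the range.

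The only delicate points are bookkeeping: confirming $m$ cannot be negative (so that $\u(i)$ genuinely reduces to $v$ rather than a larger representative), that the nested intervals telescope cleanly under the half-open convention, and that the factorization $i=p^j(v+mp^e)$ is unique so nothing is double-counted. I expect the telescoping of the $m$-ranges over all $j\ge0$ to be the one step requiring the most care. Note that none of this uses the hypothesis $p^e>4$, matching the fact that the lemma is stated with no restriction on $e$.
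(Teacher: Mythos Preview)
Your proof is correct. The paper's argument rests on the same underlying bijection but packages it more concisely: it simply observes that since the interval $(\a p^{e-1},\a p^e]$ has upper endpoint equal to $p$ times its lower endpoint, every unit $u\in[1,\a p^e]$ has a \emph{unique} power $p^t u$ landing in this interval, so $S$ is exactly the multiset of reductions mod $p^e$ of all units in $[1,\a p^e]$---which evidently hits each unit residue $\a$ times. Your parametrization $i=p^j(v+mp^e)$ is this same bijection unpacked (with $u=v+mp^e$ and $t=j$), and your telescoping of the half-open $m$-intervals is a hands-on verification that exactly one $j$ works for each of the $\a$ values $m=0,1,\dots,\a-1$. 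The paper's route is shorter and more conceptual; yours makes the counting fully explicit and supplies a built-in consistency check.
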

\begin{proof} For every $p$-adic unit $u$ in $[1,\a p^e]$, there is a unique nonnegative integer $t$ such that $\a p^{e-1}<p^tu\le\a p^e$.
This is true because the end of this interval equals $p$ times its beginning. This $u$ equals $\u(p^tu)$. Thus $S$ consists of the reductions
mod $p^e$ of all units in $[1,\a p^e]$.\end{proof}

\begin{proof}[Proof of Theorem \ref{thm1}] If $p^e>4$, the product of all $p$-adic units less than $p^e$ is congruent to $(-1)^p$ mod $p^e$.
(See, e.g., \cite[Lemma 1]{Gr}, where the argument is attributed to Gauss.) The theorem follows immediately from this and Lemma \ref{lem}, since $\u((\a p^e)!)/\u((\a p^{e-1})!)$
is the product of the numbers described in Lemma \ref{lem}.\end{proof}

\begin{proof}[Proof of Theorem \ref{thm2}] Suppose $\nu(b)=0$ and $a=\a p^k$ with $k\ge0$ and $\a=\u(a)$. Then, mod $p^e$,
\begin{eqnarray*} \u\binom{\a p^{e+k}}{bp^e}&=&\frac{\u((\a p^{e+k})!)}{\u((bp^e)!)\cdot\u(((a-b)p^e)!)}\\
&\equiv&\frac{(-1)^{p\a(e+k)}z_a}{(-1)^{pbe}z_b\cdot(-1)^{p(a-b)e}z_{a-b}}\\
&=&(-1)^{pak}\frac{z_a}{z_bz_{a-b}},\end{eqnarray*}
as claimed. Here we have used \ref{thm1} and \ref{cordef}. A similar argument works if $\nu(b)=k>0$.\end{proof}

Our proof of Theorem \ref{thm3} uses the following lemma.
\begin{lem}\label{lem2} Suppose $f$ is a function with domain $\Z\times\Z$ which satisfies Pascal's relation
\begin{equation}\label{Pas}f(n,k)=f(n-1,k)+f(n-1,k-1)\end{equation}
for all $n$ and $k$. If $f(0,d)=A\delta_{0,d}$ for all $d\in\Z$ and $f(c,0)=Ar$ for all $c<0$, then
$$f(c,d)=\begin{cases}A\binom cd&c,d\ge0\\
A\binom cdr&c<0\le d\\
A\binom c{c-d}(1-r)&c<0\le c-d\\
0&\text{otherwise.}\end{cases}$$
\end{lem}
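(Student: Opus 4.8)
The plan is to treat \eqref{Pas} as a linear recurrence on $\Z\times\Z$, so that a solution with prescribed boundary data is unique once one exists, and then to exhibit a closed form for it. Concretely I would introduce
$$g(c,d)=Ar\binom cd+A(1-r)\binom c{c-d},$$
using the conventions recalled after Theorem~\ref{thm3}: $\binom cd=0$ when $d<0$, and $\binom cd=c(c-1)\cdots(c-d+1)/d!$ when $d\ge0$, valid for negative $c$ as well. The claim is that $f=g$ and that $g$ coincides with the four-case function in the statement.

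First I would check that $g$ satisfies \eqref{Pas}. Since the relation is linear in $f$, it suffices that each of $P(c,d):=\binom cd$ and $Q(c,d):=\binom c{c-d}$ does. For $P$ this is the ordinary Pascal identity $\binom cd=\binom{c-1}d+\binom{c-1}{d-1}$, which for $d\ge1$ is a polynomial identity in $c$ (hence valid for all $c\in\Z$) and for $d\le0$ is immediate from the conventions. For $Q$, one computes $Q(c-1,d)+Q(c-1,d-1)=\binom{c-1}{c-1-d}+\binom{c-1}{c-d}$, and this equals $\binom c{c-d}=Q(c,d)$ by the Pascal identity for $P$ applied with upper index $c$ and lower index $c-d$. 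Then I would verify the boundary data: at $c=0$, both $P$ and $Q$ reduce to $\binom0d=\delta_{0,d}$, so $g(0,d)=A\delta_{0,d}$; and for $c<0$, $P(c,0)=\binom c0=1$ while $Q(c,0)=\binom cc=0$ because the lower index is negative, so $g(c,0)=Ar$.

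The substantive point is \emph{uniqueness}: a function satisfying \eqref{Pas} is determined by its restriction to the row $n=0$ together with the values $f(c,0)$ for $c<0$. For $n>0$ this is clear, since \eqref{Pas} expresses row $n$ in terms of row $n-1$. For $n<0$ one argues downward by induction: assuming row $n$ is known and given the single value $f(n-1,0)$, the rearrangements $f(n-1,k)=f(n,k)-f(n-1,k-1)$ (for $k=1,2,\dots$) and $f(n-1,k-1)=f(n,k)-f(n-1,k)$ (for $k=0,-1,-2,\dots$) recover all of row $n-1$. This is the one place where the column-$0$ data genuinely enters, and it is the main thing to get right. Since $f$ and $g$ have been arranged to agree on the row $n=0$ and on $\{(c,0):c<0\}$, they agree everywhere.

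It remains to put $g$ in the stated piecewise form. For $c\ge0$ one has $\binom c{c-d}=\binom cd$ for every $d\in\Z$ (the two sides are equal by symmetry when $0\le d\le c$ and both vanish otherwise), so $g(c,d)=A\binom cd$, which is $0$ for $d<0$. For $c<0$: if $d\ge0$ then $c-d<0$, so $\binom c{c-d}=0$ and $g(c,d)=A\binom cd\,r$; if $c-d\ge0$ then $d<0$, so $\binom cd=0$ and $g(c,d)=A\binom c{c-d}(1-r)$; and if neither inequality holds (that is, $d<0$ and $c-d<0$) both binomials vanish and $g(c,d)=0$. This matches the four cases exactly. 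Apart from the uniqueness step, the only care needed is the bookkeeping of these degenerate cases and of the conventions in the two Pascal verifications above.
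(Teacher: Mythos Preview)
Your argument is correct. The paper actually omits the proof of this lemma entirely, calling it ``straightforward,'' so there is no approach to compare against; your write-up supplies exactly the kind of verification the authors leave to the reader. The closed form $g(c,d)=Ar\binom cd+A(1-r)\binom c{c-d}$ is a clean way to package the answer, and your check that both summands satisfy \eqref{Pas} (reducing $Q$ to Pascal for $P$ with lower index $c-d$) and match the boundary data is accurate. The uniqueness step---propagating upward from row $0$ by Pascal directly, and downward by the two rearrangements anchored at the given column-$0$ value---is the genuine content here, and you have it right. The final case analysis reducing $g$ to the four-line display is also correct, using in particular that $\binom cc=0$ for $c<0$ and $\binom c{c-d}=\binom cd$ for $c\ge0$.
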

The proof of this lemma is straightforward and omitted. It is closely related to work in \cite{HP} and \cite{S}, in which binomial coefficients are extended
to negative arguments in a similar way. However, in that case (\ref{Pas}) does not hold if $n=k=0$.

\begin{proof}[Proof of Theorem \ref{thm3}] Fix $a\ge b>0$. If $f_e(c,d):=\binom{ap^e+c}{bp^e+d}$, where $e$ is large enough that $ap^e+c>0$ and $bp^e+d>0$, then (\ref{Pas}) holds for $f_e$.
If, as $e\to\infty$, the limit exists for two terms of this version of (\ref{Pas}), then it also does  for the third. The theorem then follows from Lemma \ref{lem2} and (\ref{one}) and (\ref{two}) below, using also that
if $d<0$, then $\binom{ap^e}{bp^e+d}=\binom{ap^e}{(a-b)p^e+|d|}$, to which (\ref{one}) can be applied.

If $d>0$, then
\begin{equation}\label{one}\binom{ap^e}{bp^e+d}=\binom{ap^e}{bp^e}\frac{((a-b)p^e)\cdots((a-b)p^e-d+1)}{(bp^e+1)\cdots(bp^e+d)}\to0\end{equation}
in $\Z_p$ as $e\to\infty$, since it is $p^e$ times a factor whose $p$-exponent does not change as $e$ increases through large values.

Let $c=-m$ with $m>0$. Then
\begin{equation}\label{two}\binom{ap^e-m}{bp^e}=\binom{ap^e}{bp^e}\frac{((a-b)p^e)\cdots((a-b)p^e-m+1)}{ap^e\cdots(ap^e-m+1)}\to\binom{ap^\infty}{bp^\infty}\frac{a-b}a,\end{equation}
as $e\to\infty$, since
$$\frac{((a-b)p^e-1)\cdots((a-b)p^e-m+1)}{(ap^e-1)\cdots(ap^e-m+1)}\equiv1\mod p^{e-[\log_2(m)]}.$$
\end{proof}

\def\line{\rule{.6in}{.6pt}}

\end{document}